\documentclass[11pt]{article}
\usepackage[T1]{fontenc}
\usepackage[margin=1in]{geometry}
\usepackage{mathptmx}
\usepackage{amsmath,amssymb,amsthm}
\usepackage{cite}
\usepackage{graphicx}
\usepackage{booktabs}
\usepackage{tikz}
\usepackage{pgfplots}
\pgfplotsset{compat=1.17}
\usepackage[hidelinks]{hyperref}

\newtheorem{theorem}{Theorem}[section]
\newtheorem{lemma}[theorem]{Lemma}
\newtheorem{proposition}[theorem]{Proposition}
\newtheorem{corollary}[theorem]{Corollary}
\theoremstyle{remark}
\newtheorem{remark}[theorem]{Remark}

\newcommand{\R}{\mathbf{R}}
\newcommand{\T}{\mathbf{T}}
\newcommand{\W}{\mathbf{W}}
\newcommand{\B}{\mathbf{B}}
\newcommand{\I}{\mathbf{I}}
\newcommand{\J}{\mathbf{J}}
\newcommand{\Q}{\mathbf{Q}}
\newcommand{\C}{\mathbf{C}}
\newcommand{\LM}{\mathbf{L}}
\newcommand{\PP}{\mathbf{P}}
\newcommand{\G}{\mathbf{G}}
\newcommand{\D}{\mathbf{D}}

\title{Exact fast factorizations of the AR(1) Karhunen--Lo{\`e}ve transform}

\author{Yuriy A.~Reznik\\[2pt]
\normalsize Massachusetts Institute of Technology, Cambridge, MA, USA\\
\normalsize \texttt{yreznik@mit.edu}}
\date{}

\begin{document}
\maketitle

\begin{abstract}
We derive a fast factorization of the exact Karhunen--Lo\`eve
transform (KLT) of an AR(1) source by mapping it onto the discrete
cosine transform. For even $N$ and every $\rho\in(0,1)$, the KLT
factors exactly into Chen's fast DCT-II structure---butterfly plus
fixed half-size DCT-II and DCT-IV cores---completed by two
orthogonal corrections: eigenvector matrices of
diagonal-plus-rank-one matrices carrying the entire
$\rho$-dependence. Fast DCT factorizations are reused
unchanged; as $\rho\to1$ the corrections become identities,
recovering Chen's algorithm. At short lengths the design is
explicit: at $N=4$ the transform is one butterfly and two
rotations with $\tan2\psi_s=-2/(1-\rho)$,
$\tan2\psi_a=2/(1+\rho)$; at $N=8$ every correction entry is in
radicals, via one quartic serving both branches, and factors into
six Givens rotations. Complete and scaled realizations run the
exact 8-point KLT in 32 and 24 multiplications, about twice the
fixed DCT-II; at $N=4$, a single extra multiplication.
At large $N$, fast-multipole application of the corrections yields
the exact KLT in $O(N\log N)$ operations to prescribed accuracy;
this overhead thus peaks at intermediate sizes and vanishes in
both limits.
\end{abstract}

\noindent\textbf{Keywords:} Karhunen--Lo\`eve transform, AR(1) process,
fast algorithms, DCT-II, DCT-IV, Givens rotations, Chen factorization,
rank-one update, Cauchy matrices

\section{Introduction}
\label{sec:intro}

The KLT of the AR(1) (Markov-1) source is a classical benchmark in
transform coding. Ray and Driver~\cite{raydriver1970} solved its
eigenproblem up to a transcendental frequency equation. The discrete
cosine transform of Ahmed, Natarajan, and Rao~\cite{anr1974} was
introduced precisely as a fast approximation to this KLT. Clarke later
showed that the approximation becomes exact in the limit
$\rho\to1$, where the KLT tends to the DCT-II~\cite{clarke1981}, and
that as $\rho\to0$ the KLT instead tends to the
DST-I~\cite{clarke1984}. Jain identified the underlying tridiagonal
structure~\cite{jain1976,jain1979}. Away from these limiting cases,
the exact KLT was long treated as an
unstructured dense matrix: explicit root-finding procedures for its
frequencies exist~\cite{torun2013tsp}, but for five decades no fast,
butterfly-style algorithm for the exact AR(1) KLT appeared in the
literature---in contrast to the DCT-II itself, whose factorization by
Chen, Smith, and Fralick~\cite{chen1977} into a butterfly stage and
half-size DCT-II and DCT-IV cores launched four decades of
fast-transform design~\cite{raoyip1990,britanak2006}.

It was recently shown~\cite{reznik2026tsp} that this contrast is not
intrinsic: the exact AR(1) KLT can be factorized \emph{directly}. For
even orders $N=2M$, the transform reduces to a butterfly stage, two
half-order copies of itself, and orthogonal correction stages
generated by rank-one boundary perturbations of the covariance,
\begin{equation}
\W_N(\rho) \;=\; \PP_N
\Big(\widehat\Q^{(s)}_M\,\W_M(\rho)\;\oplus\;
\widehat\Q^{(a)}_M\,\W_M(\rho)\Big)\,\B_N ,
\label{eq:tsprec}
\end{equation}
a companion identity handles odd orders, and the recursion continues
through every order $N\ge2$, running in $O(N\log N)$ operations when
the Cauchy-structured correction stages are applied by the fast
multipole method~\cite{reznik2026tsp}. The first stages of this
factorization---the sum/difference butterfly $\B_N$, the parity split
it realizes, and the output interleaving $\PP_N$---are \emph{exactly
the same} as the stages that open Chen's factorization of the
DCT-II; indeed, as $\rho\to1$ the recursion degenerates to Chen's
decomposition, with the half-order KLTs turning into the fixed
half-size DCT-II and DCT-IV cores~\cite{reznik2026tsp}.

This structural coincidence poses a complementary problem, which the
present paper solves: to derive a \emph{more direct mapping of the
KLT onto the DCT-II}. In the recursive factorization
\eqref{eq:tsprec}, the trigonometric content is generated anew at
every level, and the corrections
$\widehat\Q^{(s)}_M,\widehat\Q^{(a)}_M$ relate the KLT to its own
half-order copies. One may instead ask for a factorization in which
the trigonometric content is carried entirely by the \emph{fixed}
transforms of Chen's graph, and the whole $\rho$-dependence is
concentrated in corrections attached to them---so that fast
algorithms for the exact KLT follow by reusing, unchanged, the known
fast factorizations of the DCT-II and
DCT-IV~\cite{chen1977,britanak2006}.

The structural fact behind Chen's factorization---and behind both
factorizations of the KLT---is a mirror symmetry: the AR(1)
covariance is unchanged when its rows and columns are both read in
reverse order, so each of its eigenvectors is either
\emph{symmetric}, $w_k=w_{N-1-k}$, or \emph{antisymmetric},
$w_k=-w_{N-1-k}$, about the block center. After the butterfly
$\B_N$, which forms the boundary-paired sums
$u_k=(x_k+x_{N-1-k})/\sqrt2$ and differences
$v_k=(x_k-x_{N-1-k})/\sqrt2$, the KLT therefore splits into two
independent half-size eigenproblems. The step that carries the
mapping onto the DCT-II is taken on the inverse covariance, which is
tridiagonal: we show (Lemma~\ref{lem:folds}) that each folded
half-size block is \emph{exactly} the fixed tridiagonal matrix
diagonalized by the DCT-II, respectively the DCT-IV, scaled and
shifted, plus a rank-one term localized at the block boundary. Rotating each block into the corresponding fixed DCT basis
thus leaves a diagonal-plus-rank-one eigenproblem, and its
diagonalizer is the entire $\rho$-dependent content of the
transform.

The result (Theorem~\ref{thm:folded}) is the exact factorization
\begin{equation}
\W_N(\rho) \;=\; \PP_N
\Big(\Q^{(s)}_{M}\,\C^{\mathrm{II}}_{M}\;\oplus\;
\Q^{(a)}_{M}\,\C^{\mathrm{IV}}_{M}\Big)\,\B_N,
\qquad M=N/2:
\label{eq:mainres}
\end{equation}
the exact KLT is the fast DCT-II flow graph---butterfly plus fixed
DCT-II and DCT-IV halves---completed by two orthogonal corrections
$\Q^{(s)}_M,\Q^{(a)}_M$, the eigenvector matrices of explicit
diagonal-plus-rank-one matrices. All quantities in
\eqref{eq:mainres} beyond the Chen structure are design-time
constants: in the setting most relevant to applications, $\rho$ is
estimated once from the source class, all design computations are
performed offline, and only the resulting flow graph is applied to
each data block. Chen's algorithm is recovered verbatim in the limit
$\rho\to1$ (Corollary~\ref{cor:chen}), and the corrections are made
fully explicit at the short lengths used in practice. At $N=4$ each
correction is a single plane rotation, and the transform merges into
the fast 4-point DCT-II graph with two $\rho$-tunable closed-form
angles (Sec.~\ref{sec:n4}). At $N=8$ every correction entry is
written in radicals---the coupling weights are determined by the
core eigenvalues, the correction eigenvalues by a single quartic
serving both parity branches, and the entries by a one-line
eigenvector formula---and each correction factors into six explicit
Givens rotations (Sec.~\ref{sec:code}). An operation-count analysis
(Tables~\ref{tab:ops4} and~\ref{tab:ops}) prices exact
$\rho$-adaptivity against the fixed DCT-II: a single extra
multiplication at $N=4$, about a factor of two at $N=8$, with the overhead largest
at intermediate block sizes and vanishing again at large $N$, where
the rank-one eigenstructure of the corrections admits
accuracy-controlled $O(N\log N)$ application
(Sec.~\ref{sec:largeN}).

The paper is organized as follows. Sec.~\ref{sec:background} fixes
notation and, to keep the presentation self-sufficient, restates the
phase-equation solution of the AR(1) eigenproblem from
Ref.~[\citenum{reznik2026tsp}], repeating its short proof.
Sec.~\ref{sec:secular} works out the parity split, its half-size
frequency equations, and a no-go observation---no interior value of
$\rho$ reduces the KLT exactly to any fixed uniform-grid
transform---which is what makes the corrections unavoidable.
Sec.~\ref{sec:fact} derives the factorization \eqref{eq:mainres}.
Sec.~\ref{sec:budget} describes the structure of the corrections,
compares the exact realizations and their operation counts, and
works out the cases $N=4$ and $N=8$; Sec.~\ref{sec:largeN} treats
large $N$, and Sec.~\ref{sec:conclusion} concludes.

\section{Preliminaries}
\label{sec:background}

\subsection{Notation}

Following the conventions of Britanak, Yip, and
Rao~\cite{britanak2006}, transform matrices carry their \emph{order}
as a subscript and their \emph{type} as a roman superscript:
$\C^{\mathrm{II}}_M$ and $\C^{\mathrm{IV}}_M$ denote the orthonormal
DCT-II and DCT-IV matrices of order $M$,
\begin{equation}
\big[\C^{\mathrm{II}}_M\big]_{n,k}
= \sqrt{\tfrac{2}{M}}\,\beta_n\cos\tfrac{\pi n(2k+1)}{2M},
\qquad
\big[\C^{\mathrm{IV}}_M\big]_{n,k}
= \sqrt{\tfrac{2}{M}}\,\cos\tfrac{\pi(2n+1)(2k+1)}{4M},
\label{eq:dctdefs}
\end{equation}
with $\beta_0=1/\sqrt2$ and $\beta_n=1$ otherwise, $n,k=0,\dots,M-1$.
We also use the orthonormal DST-I matrix $\mathbf{S}^{\mathrm{I}}_M$,
$\big[\mathbf{S}^{\mathrm{I}}_M\big]_{n,k}
=\sqrt{2/(M{+}1)}\,\sin\tfrac{\pi(n+1)(k+1)}{M+1}$, which appears
below as the $\rho\to0$ limit of the KLT.
$\I_M$ is the identity and $\J_M$ the counter-identity
(order-reversal) matrix of order $M$; following
Ref.~[\citenum{britanak2006}], empty positions in block matrices denote
zero blocks. Parenthesized
superscripts $(s)$ and $(a)$ label objects attached to the
symmetric and antisymmetric halves of the parity decomposition; thus
$\Q^{(s)}_M$ is an order-$M$ matrix acting on the symmetric half.
Finally, $e_0=(1,0,\dots,0)^{T}$ denotes the first unit vector of
length $M$; in the constructions below it marks the \emph{block
boundary} of the half-size problems, and its role is not incidental:
the AR(1) eigenproblem differs from a translation-invariant one only
through its boundary conditions, and correspondingly the entire
$\rho$-dependence of the KLT beyond the fixed cores will enter
through rank-one terms supported on $e_0$.

Both DCTs in \eqref{eq:dctdefs} are eigenvector matrices of simple
tridiagonal matrices~\cite{strang1999}: with
\begin{equation}
\LM^{\mathrm{II}}_M=
\begin{pmatrix}
1 & -1 & & \\ -1 & 2 & \ddots & \\ & \ddots & \ddots & -1\\ & & -1 & 1
\end{pmatrix},
\qquad
\LM^{\mathrm{IV}}_M=
\begin{pmatrix}
1 & -1 & & \\ -1 & 2 & \ddots & \\ & \ddots & \ddots & -1\\ & & -1 & 3
\end{pmatrix},
\label{eq:generators}
\end{equation}
we have
$\C^{\mathrm{II}}_M\LM^{\mathrm{II}}_M(\C^{\mathrm{II}}_M)^{T}
=\Lambda^{\mathrm{II}}_M$ and
$\C^{\mathrm{IV}}_M\LM^{\mathrm{IV}}_M(\C^{\mathrm{IV}}_M)^{T}
=\Lambda^{\mathrm{IV}}_M$, with diagonal eigenvalue matrices
\begin{equation}
\Lambda^{\mathrm{II}}_M
=\mathop{\mathrm{diag}}_{n=0,\dots,M-1}
\Big(2-2\cos\tfrac{n\pi}{M}\Big),
\qquad
\Lambda^{\mathrm{IV}}_M
=\mathop{\mathrm{diag}}_{n=0,\dots,M-1}
\Big(2-2\cos\tfrac{(2n+1)\pi}{2M}\Big).
\label{eq:lambdas}
\end{equation}

\subsection{The AR(1) eigenproblem and its phase equation}
\label{sec:phase}

Let $\R_N(\rho)=[\rho^{|i-j|}]_{i,j=0}^{N-1}$ be the AR(1)
covariance, $\rho\in(0,1)$, and let $\W_N(\rho)$ denote its KLT: the
orthogonal matrix whose rows are the eigenvectors of $\R_N(\rho)$,
ordered by decreasing eigenvalue. The inverse covariance is exactly
tridiagonal~\cite{jain1976},
\begin{equation}
\T_N(\rho) \triangleq (1-\rho^{2})\,\R_N(\rho)^{-1} =
\begin{pmatrix}
1 & -\rho & & \\
-\rho & 1{+}\rho^{2} & \ddots & \\
 & \ddots & \ddots & -\rho\\
 & & -\rho & 1
\end{pmatrix},
\label{eq:tridiag}
\end{equation}
so $\R_N$ and $\T_N$ share their eigenvectors, with eigenvalues
related by $\lambda=(1-\rho^{2})/\mu$ and the order reversed. The
eigenproblem $\T_N w=\mu w$ is a three-term recurrence closed by
its boundary rows. The following proposition,
established in Ref.~[\citenum{reznik2026tsp}], solves it in a form
organized around a single \emph{phase equation}; we repeat its short
proof to keep the presentation self-contained.

\begin{proposition}[{Phase equation, Ref.~[\citenum{reznik2026tsp}]}]
\label{prop:phaseeq}
Define the boundary phase
\begin{equation}
\theta(\omega) \triangleq \arg\!\big(1-\rho e^{-j\omega}\big)
= \arctan\frac{\rho\sin\omega}{1-\rho\cos\omega},
\qquad \omega\in(0,\pi).
\label{eq:theta}
\end{equation}
For each $m=1,\dots,N$ the equation
\begin{equation}
(N+1)\,\omega + 2\,\theta(\omega) = m\,\pi
\label{eq:phaseeq}
\end{equation}
has a unique root $\omega_m\in(0,\pi)$, and
$\omega_1<\dots<\omega_N$. The eigenpairs of $\R_N(\rho)$ are
\begin{equation}
\lambda_m=\frac{1-\rho^{2}}{1-2\rho\cos\omega_m+\rho^{2}},
\qquad
w^{(m)}_k = c_m\,\sin\!\big(\omega_m(k+1)+\theta(\omega_m)\big),
\quad k=0,\dots,N-1,
\label{eq:eigpairs}
\end{equation}
with $\lambda_1>\dots>\lambda_N$ and unit-normalization constants
$c_m>0$ given explicitly by \eqref{eq:cm} below.
Moreover, $w^{(m)}$ is symmetric ($w_k=w_{N-1-k}$) for odd $m$ and
antisymmetric ($w_k=-w_{N-1-k}$) for even $m$.
\end{proposition}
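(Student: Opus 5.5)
We work with the tridiagonal form $\T_N(\rho)$ of \eqref{eq:tridiag}, which shares its eigenvectors with $\R_N(\rho)$. The plan is to solve the interior three-term recurrence by a phase-shifted sinusoid and then impose the two boundary rows, which reduce to the single equation \eqref{eq:phaseeq}.

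\emph{Step 1 (interior rows).} For $1\le k\le N-2$ the row reads $-\rho w_{k-1}+(1+\rho^2)w_k-\rho w_{k+1}=\mu w_k$. Substituting $w_k=\sin(\omega(k+1)+\phi)$ and using $\sin(a-\omega)+\sin(a+\omega)=2\cos\omega\sin a$, every interior row holds with $\mu=1-2\rho\cos\omega+\rho^2=|1-\rho e^{-j\omega}|^2$, for any phase $\phi$.

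\emph{Step 2 (first row).} The row $w_0-\rho w_1=\mu w_0$ is equivalent to the interior recurrence extended to a fictitious $w_{-1}$, provided $\rho^2 w_0=\rho w_{-1}$, i.e.\ $w_{-1}=\rho w_0$. With $w_{-1}=\sin\phi$ and $w_0=\sin(\omega+\phi)$ this gives $\sin\phi=\rho\sin(\omega+\phi)$, hence $\tan\phi\,(1-\rho\cos\omega)=\rho\sin\omega$, so $\phi=\theta(\omega)$ taken mod $\pi$. This phase is consistent with the argument of $1-\rho e^{-j\omega}$, which lies in $(0,\pi/2)$ for $\omega\in(0,\pi)$.

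\emph{Step 3 (last row).} The mirror condition $w_N=\rho w_{N-1}$ is needed at the other end. By the reflection symmetry $\J_N\T_N\J_N=\T_N$, the same computation from the right end gives $w_k=\pm\sin(\omega(N-k)+\theta)$. Matching this with $\sin(\omega(k+1)+\theta)$ requires $\omega(k+1)+\theta=m\pi-(\omega(N-k)+\theta)$ for all $k$. This is exactly $(N+1)\omega+2\theta(\omega)=m\pi$, with sign $(-1)^{m+1}$. The sign gives the parity claim directly: $w_{N-1-k}=(-1)^{m+1}w_k$, so the eigenvector is symmetric for odd $m$ and antisymmetric for even $m$.

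\emph{Step 4 (roots and completeness).} Let $f(\omega)=(N+1)\omega+2\theta(\omega)$. We have $\theta(0^+)=\theta(\pi^-)=0$, so $f$ runs from $0$ to $(N+1)\pi$. We also have $\theta'(\omega)=\rho(\cos\omega-\rho)/(1-2\rho\cos\omega+\rho^2)\ge -\rho/(1+\rho)>-1/2$. Hence $f'\ge N+1-2\rho/(1+\rho)>0$, so $f$ is strictly increasing. Each level $m\pi$ with $m=1,\dots,N$ is therefore attained exactly once in $(0,\pi)$, and the roots are ordered $\omega_1<\dots<\omega_N$.

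Each root gives a nonzero eigenvector: if $w_k=0$ for all $k$, then two consecutive values $w_0=w_1=0$ would vanish, which forces $\sin\omega=0$. Since $\mu=1-2\rho\cos\omega+\rho^2$ is strictly increasing in $\omega$, the $N$ eigenvalues are distinct. They therefore exhaust the spectrum of $\T_N$. Converting via $\lambda=(1-\rho^2)/\mu$ reverses the order, giving $\lambda_1>\dots>\lambda_N$, and the constants $c_m$ come from normalizing each eigenvector.

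The main obstacle is the monotonicity bound $\theta'>-1/2$ in Step 4. Without it we cannot guarantee exactly one root per level and hence completeness. The bound itself follows from minimizing the rational function $\theta'$, whose minimum occurs at $\cos\omega=-1$.
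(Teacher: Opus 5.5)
Your proof is correct and follows the paper's route: the sinusoidal solution of the interior rows, the boundary rows recast as fictitious samples $w_{-1}=\rho w_0$ and $w_N=\rho w_{N-1}$, and the bound $\theta'\ge-\rho/(1+\rho)>-\tfrac12$ for uniqueness, ordering and completeness. It differs only in handling the right end through the mirror symmetry $\J_N\T_N\J_N=\T_N$, so the parity sign $(-1)^{m+1}$ comes out together with the phase equation; the paper instead substitutes directly into $w_N=\rho w_{N-1}$ to get $\tan\alpha=-\tan\theta$ and derives parity afterwards. The one claim you leave unproved is the explicit form of $c_m$, which the paper obtains from $\sum_{k=0}^{N-1}\sin^{2}(\omega(k{+}1)+\theta)=\tfrac{N}{2}-\tfrac{\sin N\omega}{2\sin\omega}\cos\big((N{+}1)\omega+2\theta\big)$, with the cosine equal to $(-1)^{m}$ at each root.
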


\begin{proof}
The interior rows of $\T_N w=\mu w$ read
$-\rho w_{k-1}+(1+\rho^{2})w_k-\rho w_{k+1}=\mu w_k$ and are
satisfied by $w_k=\sin(\omega k+\varphi)$ for any phase $\varphi$,
provided $\mu=1+\rho^{2}-2\rho\cos\omega$; this gives the eigenvalue
formula in \eqref{eq:eigpairs} via $\lambda=(1-\rho^{2})/\mu$. The
first and last rows of \eqref{eq:tridiag} are equivalent to extending
the interior recurrence with fictitious samples obeying
\begin{equation}
w_{-1}=\rho\,w_0,\qquad w_{N}=\rho\,w_{N-1}
\label{eq:bc}
\end{equation}
(e.g., the row-0 equation
$w_0-\rho w_1=\mu w_0$ coincides with the interior equation at $k=0$
once $w_{-1}=\rho w_0$ is substituted). The left condition
determines the phase: $\sin(\varphi-\omega)=\rho\sin\varphi$ gives
$\tan\varphi=\sin\omega/(\cos\omega-\rho)$, whose solution in
$(0,\pi)$ is $\varphi=\arg(e^{j\omega}-\rho)
=\arg\big(e^{j\omega}(1-\rho e^{-j\omega})\big)=\omega+\theta(\omega)$,
so $w_k=\sin(\omega(k{+}1)+\theta)$. Substituting into the right
condition and writing $\alpha=\omega(N{+}1)+\theta$,
$\sin\alpha=\rho\sin(\alpha-\omega)$ rearranges to
$\tan\alpha=-\rho\sin\omega/(1-\rho\cos\omega)=-\tan\theta$, i.e.\
$\alpha=m\pi-\theta$, which is \eqref{eq:phaseeq}. For uniqueness and
ordering, differentiate \eqref{eq:theta}:
$\theta'(\omega)=\rho(\cos\omega-\rho)/(1-2\rho\cos\omega+\rho^{2})
\ge-\rho/(1{+}\rho)>-\tfrac12$, so the left side of
\eqref{eq:phaseeq} is strictly increasing with slope at least $N$,
running from $0$ at $\omega=0$ to $(N{+}1)\pi$ at $\omega=\pi$; it
crosses each level $m\pi$, $m=1,\dots,N$, exactly once. Since
$\lambda(\omega)$ is strictly decreasing on $(0,\pi)$, the $N$
eigenvalues are distinct and the system of eigenvectors is complete.
The normalization is likewise explicit: by the finite sum
$\sum_{k=0}^{N-1}\sin^{2}(\omega(k{+}1)+\theta)
=\tfrac{N}{2}-\tfrac{\sin N\omega}{2\sin\omega}
\cos\!\big((N{+}1)\omega+2\theta\big)$
and \eqref{eq:phaseeq}, which sets the cosine to $\cos m\pi=(-1)^m$
at each root,
\begin{equation}
c_m^{-2} \;=\; \frac{N}{2}
\;-\;(-1)^{m}\,\frac{\sin N\omega_m}{2\sin\omega_m}\,,
\label{eq:cm}
\end{equation}
which at $\rho=0$ (where $\omega_m=\tfrac{m\pi}{N+1}$, so
$\sin N\omega_m=(-1)^{m+1}\sin\omega_m$) recovers the DST-I value
$c_m=\sqrt{2/(N{+}1)}$.
Finally, using \eqref{eq:phaseeq},
\[
w^{(m)}_{N-1-k}=c_m\sin\big(\omega_m(N-k)+\theta\big)
=c_m\sin\big(m\pi-\omega_m(k{+}1)-\theta\big)
=(-1)^{m+1}\,w^{(m)}_k ,
\]
which proves the parity claim.
\end{proof}

This has a simple interpretation: each eigenmode is a sinusoid
reflecting between the block ends, and each boundary condition in
\eqref{eq:bc} contributes the reflection phase $\theta(\omega)$ to
the total phase budget $m\pi$ in \eqref{eq:phaseeq}. As $\rho\to0$,
where $\theta\to0$, the modes tend to the DST-I basis
$\mathbf{S}^{\mathrm{I}}_N$. As $\rho\to1$, where
$\theta\to(\pi-\omega)/2$, they become the DCT-II
basis~\cite{clarke1981,clarke1984}, in agreement with the classical
limits.

\section{Parity splitting}
\label{sec:secular}

Let $N$ be even, $M=N/2$. By Proposition~\ref{prop:phaseeq}, the
eigenvectors of $\R_N(\rho)$ split into symmetric and antisymmetric
classes; this parity decomposition is realized by the orthonormal
butterfly $\B_N$
mapping $x\mapsto(u_0,\dots,u_{M-1},v_0,\dots,v_{M-1})$, with
$u_k=(x_k+x_{N-1-k})/\sqrt2$ and $v_k=(x_k-x_{N-1-k})/\sqrt2$:
\begin{equation}
\B_N = \frac{1}{\sqrt2}
\begin{pmatrix}\I_M & \J_M\\ \I_M & -\J_M\end{pmatrix}
= \begin{pmatrix}\I_M & \\ & \J_M\end{pmatrix}
\widetilde\B_N,
\qquad
\widetilde\B_N = \frac{1}{\sqrt2}
\begin{pmatrix}\I_M & \J_M\\ \J_M & -\I_M\end{pmatrix}.
\label{eq:butterfly}
\end{equation}
The second form is the implementation view: $\widetilde\B_N$ is the
fully symmetric butterfly, whose difference outputs appear in
mirrored order and are restored to natural order by the reversal
$\J_M$; this is the ``reorder'' stage in the flow graphs below.
Up to the scaling $1/\sqrt2$, $\widetilde\B_N$ is exactly the
butterfly matrix of Chen, Smith, and Fralick~\cite{chen1977}, who
write it with entries $\pm1$ and absorb the reversal into their
bit-reversed output ordering.
Because symmetric eigenvectors are constant on the pairs
$(k,N{-}1{-}k)$ and antisymmetric ones change sign across them,
$\B_N$ block-diagonalizes $\T_N(\rho)$, with the two blocks carrying
the modes of odd and even $m$, respectively. The phase equation itself
then splits by the parity of $m$ into two half-size,
arctangent-free \emph{frequency equations}---the discrete counterparts
of the Ray--Driver pair~\cite{reznik2026tsp}.

\begin{proposition}[{Frequency equations, Ref.~[\citenum{reznik2026tsp}]}]
\label{prop:secular}
For $\rho\in(0,1)$, the frequencies of the symmetric modes (odd $m$)
are exactly the roots in $(0,\pi)$ of
\begin{equation}
\cos\tfrac{(N+1)\omega}{2}=\rho\,\cos\tfrac{(N-1)\omega}{2},
\label{eq:secular-s}
\end{equation}
and those of the antisymmetric modes (even $m$) the roots of
\begin{equation}
\sin\tfrac{(N+1)\omega}{2}=\rho\,\sin\tfrac{(N-1)\omega}{2}.
\label{eq:secular-a}
\end{equation}
\end{proposition}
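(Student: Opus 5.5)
We derive both equations from the phase equation \eqref{eq:phaseeq} of Proposition~\ref{prop:phaseeq}, splitting by the parity of $m$, and then check that no spurious roots appear. Write the phase equation as $\tfrac{(N+1)\omega}{2}+\theta(\omega)=\tfrac{m\pi}{2}$. For odd $m$ this says $\cos\!\big(\tfrac{(N+1)\omega}{2}+\theta\big)=0$. For even $m$ it says $\sin\!\big(\tfrac{(N+1)\omega}{2}+\theta\big)=0$.

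To remove the arctangent, we use $\theta=\arg(1-\rho e^{-j\omega})$. This gives $1-\rho e^{-j\omega}=r e^{j\theta}$ with $r=|1-\rho e^{-j\omega}|>0$. Set $\alpha=\tfrac{(N+1)\omega}{2}$ and multiply $e^{j(\alpha+\theta)}$ by $r$:
\[
r\,e^{j(\alpha+\theta)}=e^{j\alpha}\big(1-\rho e^{-j\omega}\big)
=e^{j\frac{(N+1)\omega}{2}}-\rho\,e^{j\frac{(N-1)\omega}{2}} .
\]
Taking real parts, $r\cos(\alpha+\theta)=\cos\tfrac{(N+1)\omega}{2}-\rho\cos\tfrac{(N-1)\omega}{2}$. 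Taking imaginary parts, $r\sin(\alpha+\theta)=\sin\tfrac{(N+1)\omega}{2}-\rho\sin\tfrac{(N-1)\omega}{2}$. Since $r>0$, $\cos(\alpha+\theta)=0$ is equivalent to \eqref{eq:secular-s}, and $\sin(\alpha+\theta)=0$ is equivalent to \eqref{eq:secular-a}.

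This reduces the statement to a bookkeeping issue. On $(0,\pi)$, $\cos(\alpha+\theta)=0$ means $\alpha+\theta=m\pi/2$ for some odd integer $m$, but we need $m\in\{1,\dots,N\}$. By the monotonicity established in the proof of Proposition~\ref{prop:phaseeq}, the function $2(\alpha+\theta)=(N+1)\omega+2\theta$ increases strictly from $0$ to $(N+1)\pi$ as $\omega$ runs over $(0,\pi)$. It uses the boundary values $\theta\to0$ at both ends, since $\sin\omega\to0$ and $1-\rho\cos\omega>0$.

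Hence the attainable levels are exactly $m\pi$ with $1\le m\le N$, each attained once. So the roots of \eqref{eq:secular-s} in $(0,\pi)$ are precisely the $\omega_m$ with $m$ odd. Likewise, the roots of \eqref{eq:secular-a} are precisely the $\omega_m$ with $m$ even, where even $m$ means $\alpha+\theta\in\pi\mathbb Z$.

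Combining this with the parity statement of Proposition~\ref{prop:phaseeq} (odd $m$ gives symmetric modes, even $m$ antisymmetric) completes the proof.

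The step needing care is the last one: the cleared equations might a priori admit extra roots, and we must confirm that each root is simple and none is spurious. The range argument above handles this. One also uses that $\theta$ is a continuous branch of the argument, valid because $\operatorname{Re}(1-\rho e^{-j\omega})>0$.
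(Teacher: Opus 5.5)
Your proof is correct and follows the paper's argument. Multiplying $e^{j(\alpha+\theta)}$ by $|1-\rho e^{-j\omega}|>0$ and taking real and imaginary parts of $e^{j\frac{(N+1)\omega}{2}}-\rho\,e^{j\frac{(N-1)\omega}{2}}$ is exactly the paper's step, and your range argument ($(N+1)\omega+2\theta$ increases strictly over $(0,(N+1)\pi)$, so every root of the cleared equations in $(0,\pi)$ is some $\omega_m$ of the right parity) makes explicit the converse, ``exactly'' direction that the paper leaves implicit.
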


\begin{proof}
Equation \eqref{eq:phaseeq} reads
$\tfrac{(N+1)\omega}{2}+\theta=\tfrac{m\pi}{2}$; for odd $m$ the
cosine of the left side vanishes. Multiplying by
$|1-\rho e^{-j\omega}|$ and using
$e^{j\theta}|1-\rho e^{-j\omega}|=1-\rho e^{-j\omega}$,
\[
\mathrm{Re}\!\left[e^{\,j\frac{(N+1)\omega}{2}}
\big(1-\rho e^{-j\omega}\big)\right]
= \cos\tfrac{(N+1)\omega}{2}-\rho\cos\tfrac{(N-1)\omega}{2}=0 .
\]
For even $m$ the imaginary part of the same quantity vanishes,
giving \eqref{eq:secular-a}.
\end{proof}

Written in terms of $z=e^{j\omega}$,
\eqref{eq:secular-s} and \eqref{eq:secular-a} are polynomial
equations; for even $N$, each reduces to a polynomial of degree $M$
in $\cos\omega$. The frequencies---and with them every factorization
constant constructed below---are therefore algebraic functions of
$\rho$, and for $N\le8$, where these polynomials have degree at most
four, they are expressible in radicals (Secs.~\ref{sec:n4} and~\ref{sec:code}).
More important here is what these equations rule out. For
every $\rho\in(0,1)$, no eigenfrequency lies on the DST-I grid
$\{m\pi/(N{+}1)\}$, since this would force $\theta(\omega_m)=0$ in
\eqref{eq:phaseeq}, which is impossible for $\rho>0$; nor does any
eigenfrequency lie on the DCT-II grid $\{n\pi/N\}$: substituting
$\omega=2l\pi/N$ into \eqref{eq:secular-s} gives
$(-1)^{l}\cos\tfrac{l\pi}{N}=\rho(-1)^{l}\cos\tfrac{l\pi}{N}$,
forcing $\rho=1$, and the remaining parity combinations likewise
force $\rho\in\{-1,1\}$. Thus no interior value of $\rho$ reduces the
KLT exactly to a fixed uniform-grid trigonometric transform. The
appropriate fast structure is instead an exact factorization with
fixed fast cores and $\rho$-dependent corrections, which we now
derive.

\section{The exact factorization of the KLT}
\label{sec:fact}

The derivation takes three elementary steps: split the tridiagonal
$\T_N(\rho)$ into two half-size blocks by symmetry; recognize each
block, entry by entry, as one of the fixed matrices
\eqref{eq:generators} scaled and shifted, plus a rank-one term
(Lemma~\ref{lem:folds}); and rotate each block by its DCT, which
leaves only a diagonal-plus-rank-one matrix to diagonalize
(Theorem~\ref{thm:folded}). The fold itself is common to
this paper and to the recursive factorization of
Ref.~[\citenum{reznik2026tsp}]; the two constructions part ways in how
the folded blocks are expressed.

\begin{lemma}[Parity split of the inverse covariance]
\label{lem:folds}
Let $N$ be even, $M=N/2$. Then
\begin{equation}
\B_N\,\T_N(\rho)\,\B_N^{T}
=\begin{pmatrix}\T^{(s)}_M & \\[2pt] & \T^{(a)}_M\end{pmatrix},
\label{eq:blockdiag}
\end{equation}
where the two half-size blocks satisfy the exact identities
\begin{equation}
\T^{(s)}_M = \rho\,\LM^{\mathrm{II}}_M+(1-\rho)^{2}\I_M
+\rho(1-\rho)\,e_0e_0^{T},
\qquad
\T^{(a)}_M = \rho\,\LM^{\mathrm{IV}}_M+(1-\rho)^{2}\I_M
+\rho(1-\rho)\,e_0e_0^{T}.
\label{eq:folds}
\end{equation}
\end{lemma}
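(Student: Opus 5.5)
We prove the lemma by direct computation. Write $\T_N(\rho)$ in $M\times M$ blocks as
\[
\T_N=\begin{pmatrix}\mathbf{A} & \mathbf{E}\\ \mathbf{E}^{T} & \mathbf{D}\end{pmatrix},
\]
where $\mathbf{A}$ is the top-left tridiagonal block. Its diagonal is $(1,1{+}\rho^2,\dots,1{+}\rho^2)$ and its off-diagonals are $-\rho$. The persymmetry of $\T_N$, $\J_N\T_N\J_N=\T_N$, gives $\mathbf{D}=\J_M\mathbf{A}\J_M$ and $\mathbf{E}^{T}=\J_M\mathbf{E}\J_M$. The coupling block $\mathbf{E}$ has a single nonzero entry $-\rho$, in position $(M{-}1,0)$; it comes from the edge joining $x_{M-1}$ and $x_M$. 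Hence $\mathbf{E}\J_M=-\rho\, e_{M-1}e_{M-1}^{T}$, where $e_{M-1}$ is the last unit vector.

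Next we multiply out $\B_N\T_N\B_N^{T}$ using \eqref{eq:butterfly}, so $\B_N^{T}=\tfrac1{\sqrt2}\begin{pmatrix}\I & \I\\ \J & -\J\end{pmatrix}$. The $(1,1)$ block is $\tfrac12(\mathbf{A}+\mathbf{E}\J+\J\mathbf{E}^{T}+\J\mathbf{D}\J)=\mathbf{A}+\mathbf{E}\J$. The $(2,2)$ block is $\tfrac12(\mathbf{A}-\mathbf{E}\J-\J\mathbf{E}^{T}+\J\mathbf{D}\J)=\mathbf{A}-\mathbf{E}\J$. The off-diagonal block is $\tfrac12(\mathbf{A}-\mathbf{E}\J+\J\mathbf{E}^{T}-\J\mathbf{D}\J)$, and this vanishes by the persymmetry relations above. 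This proves \eqref{eq:blockdiag} with
\[
\T^{(s)}_M=\mathbf{A}-\rho\,e_{M-1}e_{M-1}^{T},\qquad \T^{(a)}_M=\mathbf{A}+\rho\,e_{M-1}e_{M-1}^{T}.
\]
This is equivalent to the "fictitious sample" folding: the symmetric branch sees $x_M=x_{M-1}$, and the antisymmetric branch sees $x_M=-x_{M-1}$.

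It remains to match entries with \eqref{eq:folds}. The right-hand side $\rho\LM^{\mathrm{II}}_M+(1-\rho)^2\I_M+\rho(1-\rho)e_0e_0^{T}$ has off-diagonals $-\rho$. Its interior diagonal is $2\rho+(1-\rho)^2=1+\rho^2$. The first diagonal entry is $\rho+(1-\rho)^2+\rho(1-\rho)=1$. The last diagonal entry is $\rho+(1-\rho)^2=1-\rho+\rho^2$, which agrees with $1+\rho^2-\rho$. For the DCT-IV case the last entry becomes $3\rho+(1-\rho)^2=1+\rho+\rho^2$, which agrees with $1+\rho^2+\rho$, and the other entries are unchanged.

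The main point needing care is the degenerate size $M=1$. There the first and last rows coincide, and the generators \eqref{eq:generators} must be read as the $1\times1$ matrices $(0)$ and $(2)$ respectively, so that the boundary corrections add. With this reading the check still works: the symmetric block gives $(1-\rho)^2+\rho(1-\rho)=1-\rho$, and the antisymmetric block gives $2\rho+(1-\rho)^2+\rho(1-\rho)=1+\rho$. These agree with the direct values $1\mp\rho$. Otherwise the proof is routine bookkeeping, and the only thing to get right is the sign convention for $\J_M$ in the difference branch.
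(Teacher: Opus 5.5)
Your proof is correct and follows essentially the same route as the paper. The paper uses persymmetry to write the folded entries as $T_{ij}\pm T_{i,N-1-j}$, which is exactly your $\mathbf{A}\pm\mathbf{E}\J_M$ in block form, and then makes the same entrywise comparison with \eqref{eq:folds}. Your separate check of the degenerate case $M=1$, with $\LM^{\mathrm{II}}_1=(0)$ and $\LM^{\mathrm{IV}}_1=(2)$, covers a point the paper's proof passes over silently.
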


\begin{proof}
Write $T_{ij}$ for the entries of $\T_N(\rho)$, and note from
\eqref{eq:tridiag} that $T_{N-1-i,\,N-1-j}=T_{ij}$. A direct
computation with the rows of $\B_N$ in \eqref{eq:butterfly} then
gives, for $0\le i,j\le M{-}1$,
\[
\big(\B_N\T_N\B_N^{T}\big)_{i,j}=T_{ij}+T_{i,N-1-j},
\qquad
\big(\B_N\T_N\B_N^{T}\big)_{M+i,\,M+j}=T_{ij}-T_{i,N-1-j},
\]
and the two off-diagonal blocks vanish, proving
\eqref{eq:blockdiag} with
$\big(\T^{(s)}_M\big)_{ij}=T_{ij}+T_{i,N-1-j}$ and
$\big(\T^{(a)}_M\big)_{ij}=T_{ij}-T_{i,N-1-j}$. Since $\T_N$ is
tridiagonal, the cross term $T_{i,N-1-j}$ vanishes for all
$i,j\le M{-}1$ except at $i=j=M{-}1$, where it equals
$T_{M-1,M}=-\rho$. Hence both blocks equal the leading $M\times M$
block of $\T_N$---diagonal $(1,1{+}\rho^{2},\dots,1{+}\rho^{2})$,
off-diagonals $-\rho$---with the last diagonal entry shifted to
$1+\rho^{2}\mp\rho$. It remains to compare with the right sides of
\eqref{eq:folds} entrywise. Interior diagonals:
$2\rho+(1-\rho)^{2}=1+\rho^{2}$; off-diagonals: $-\rho$; both match.
Last diagonal entries: the corner values $1$ of
$\LM^{\mathrm{II}}_M$ and $3$ of $\LM^{\mathrm{IV}}_M$ in
\eqref{eq:generators} give $\rho\cdot1+(1-\rho)^{2}=1+\rho^{2}-\rho$
and $\rho\cdot3+(1-\rho)^{2}=1+\rho^{2}+\rho$; both match. First
diagonal entries: $\rho\cdot1+(1-\rho)^{2}=1-\rho+\rho^{2}$ differs
from the required $1$ by exactly $\rho(1-\rho)$---which is supplied
by the rank-one term $\rho(1-\rho)e_0e_0^{T}$. All entries agree,
proving \eqref{eq:folds}.
\end{proof}

\begin{remark}[Relation to the recursive fold]
\label{rem:tspfold}
The fold \eqref{eq:blockdiag} is also the starting point of the
recursive factorization \eqref{eq:tsprec}~\cite{reznik2026tsp}.
There the same blocks are written as
$\T^{(s)}_M=\T_M(\rho)-\rho(1-\rho)\,e_{M-1}e_{M-1}^{T}$ and
$\T^{(a)}_M=\T_M(\rho)+\rho(1+\rho)\,e_{M-1}e_{M-1}^{T}$---the
half-size AR(1) matrix $\T_M(\rho)$ perturbed at the last coordinate
$e_{M-1}$, nearest the fold---which drives the recursion through the
half-order KLT $\W_M(\rho)$. The identities \eqref{eq:folds} instead
express the same blocks through the \emph{fixed} matrices
$\LM^{\mathrm{II}}_M,\LM^{\mathrm{IV}}_M$, with the rank-one term
moved to the outer boundary $e_0$: this is the splitting that maps
the KLT directly onto the fixed DCT bases. Both decompositions are
exact; they simply localize the $\rho$-dependence at opposite ends
of the folded block.
\end{remark}

Lemma~\ref{lem:folds} exhibits each half-size block as a fixed
matrix from \eqref{eq:generators}, scaled and shifted, plus a
rank-one term at the boundary coordinate $e_0$. Rotating each block
into its core's eigenbasis therefore leaves a diagonal matrix plus a
rank-one term. Define
\begin{equation}
\begin{aligned}
\G^{(s)}_M &\,\triangleq\,
\C^{\mathrm{II}}_M\,\T^{(s)}_M\big(\C^{\mathrm{II}}_M\big)^{T}
= \D^{(s)}_M+\rho(1-\rho)\,q^{(s)}\big(q^{(s)}\big)^{T},\\
\G^{(a)}_M &\,\triangleq\,
\C^{\mathrm{IV}}_M\,\T^{(a)}_M\big(\C^{\mathrm{IV}}_M\big)^{T}
= \D^{(a)}_M+\rho(1-\rho)\,q^{(a)}\big(q^{(a)}\big)^{T},
\end{aligned}
\label{eq:Gdefs}
\end{equation}
where, using \eqref{eq:lambdas},
\begin{equation}
\D^{(s)}_M=\rho\,\Lambda^{\mathrm{II}}_M+(1-\rho)^{2}\I_M,
\qquad
\D^{(a)}_M=\rho\,\Lambda^{\mathrm{IV}}_M+(1-\rho)^{2}\I_M,
\qquad
q^{(s)}=\C^{\mathrm{II}}_Me_0,
\qquad
q^{(a)}=\C^{\mathrm{IV}}_Me_0.
\label{eq:Ddefs}
\end{equation}
The correction factors are now defined as the diagonalizers of
\eqref{eq:Gdefs}: for $0<\rho<1$ and $x\in\{s,a\}$, $\Q^{(x)}_M$
is the orthogonal
matrix whose rows are the unit eigenvectors of $\G^{(x)}_M$, ordered
by increasing eigenvalue, so that
\begin{equation}
\Q^{(x)}_M\,\G^{(x)}_M\big(\Q^{(x)}_M\big)^{T}=\Delta^{(x)}_M,
\qquad \Delta^{(x)}_M \ \text{diagonal, ascending}.
\label{eq:Qdef}
\end{equation}

\begin{theorem}[Factorization of the KLT]
\label{thm:folded}
Let $0<\rho<1$, let $N$ be even, $M=N/2$, and let
$\Q^{(s)}_M,\Q^{(a)}_M$ be as in \eqref{eq:Qdef}. Then the AR(1) KLT factors exactly as
\begin{equation}
\W_N(\rho) \;=\; \PP_N
\begin{pmatrix}
\Q^{(s)}_M\,\C^{\mathrm{II}}_M & \\[2pt]
 & \Q^{(a)}_M\,\C^{\mathrm{IV}}_M
\end{pmatrix}
\B_N ,
\label{eq:factor}
\end{equation}
where $\PP_N$ is the interleaving permutation matrix,
$[\PP_N]_{2i,\,i}=[\PP_N]_{2i+1,\,M+i}=1$ for $i=0,\dots,M{-}1$ and
zero otherwise; that is, the KLT outputs of even index
$y_0,y_2,\dots$ are the outputs of the symmetric branch in order, and
those of odd index $y_1,y_3,\dots$ are the outputs of the
antisymmetric branch in order.
\end{theorem}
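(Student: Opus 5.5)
The plan is to verify that the right-hand side of \eqref{eq:factor}, call it $\widetilde\W$, is orthogonal, diagonalizes $\T_N(\rho)$, and orders its rows exactly as $\W_N(\rho)$ does. The argument then closes by uniqueness of the eigenbasis up to signs.

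\emph{Orthogonality and diagonalization.} $\widetilde\W$ is a product of the orthogonal matrices $\PP_N$, the block-diagonal factor with orthogonal blocks $\Q^{(x)}_M\C^{\cdot}_M$, and $\B_N$, so it is orthogonal. By Lemma~\ref{lem:folds},
\[
\begin{pmatrix}\Q^{(s)}_M\C^{\mathrm{II}}_M & \\ & \Q^{(a)}_M\C^{\mathrm{IV}}_M\end{pmatrix}\B_N\T_N\B_N^{T}\begin{pmatrix}\Q^{(s)}_M\C^{\mathrm{II}}_M & \\ & \Q^{(a)}_M\C^{\mathrm{IV}}_M\end{pmatrix}^{T}
=\Q^{(s)}_M\G^{(s)}_M(\Q^{(s)}_M)^{T}\oplus\Q^{(a)}_M\G^{(a)}_M(\Q^{(a)}_M)^{T}.
\]
By \eqref{eq:Gdefs}--\eqref{eq:Qdef} the right side equals $\Delta^{(s)}_M\oplus\Delta^{(a)}_M$. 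Each row of $\widetilde\W$ is therefore a unit eigenvector of $\T_N$, hence of $\R_N$. Rows $0,\dots,M-1$ of the block factor lie in the range of $\B_N^T$ restricted to the first block, i.e.\ in the symmetric vectors $x_k=x_{N-1-k}$. Rows $M,\dots,2M-1$ lie in the antisymmetric ones.

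\emph{Ordering.} This is the step that needs care. By Proposition~\ref{prop:phaseeq}, the eigenvalues of $\T_N$ are $\mu_m=1-2\rho\cos\omega_m+\rho^2$. They are strictly increasing in $m$, all distinct, and the symmetric eigenvectors are exactly those with odd $m$. Hence the spectrum of $\T^{(s)}_M$ (equivalently of $\G^{(s)}_M$) is $\{\mu_1,\mu_3,\dots,\mu_{2M-1}\}$ and that of $\T^{(a)}_M$ is $\{\mu_2,\dots,\mu_{2M}\}$. Since $\Delta^{(x)}_M$ is ascending, the $i$-th row of the symmetric branch has eigenvalue $\mu_{2i+1}$ and the $i$-th antisymmetric row has $\mu_{2i+2}$. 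The interleaving $\PP_N$ sends these to output positions $2i$ and $2i+1$, so output row $r$ carries $\mu_{r+1}$. This is increasing in $r$, which corresponds to decreasing $\lambda=(1-\rho^2)/\mu$, exactly the ordering defining $\W_N(\rho)$.

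\emph{Uniqueness.} All eigenvalues are simple, so each row of $\widetilde\W$ coincides with the corresponding row of $\W_N(\rho)$ up to sign. The rows of $\Q^{(x)}_M$ are themselves determined only up to sign by \eqref{eq:Qdef}. The factorization therefore holds exactly once the sign convention of $\Q^{(x)}_M$ is chosen to match that of $\W_N$ (e.g.\ $w^{(m)}_0>0$ from \eqref{eq:eigpairs} with $c_m>0$). I would state that sign normalization explicitly, since the KLT itself is defined only up to row signs.
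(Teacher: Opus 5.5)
Your proof is correct and follows the paper's own argument. You diagonalize $\T_N$ via Lemma~\ref{lem:folds} and \eqref{eq:Qdef}, identify the symmetric and antisymmetric halves with odd and even $m$ through Proposition~\ref{prop:phaseeq}, and use the strict monotonicity of $\mu_m$ to show that $\PP_N$ realizes the alternating order; your explicit row-sign normalization (e.g.\ $w^{(m)}_0>0$) is a worthwhile addition, since the paper leaves implicit that $\W_N$ and $\Q^{(x)}_M$ are both determined only up to row signs.
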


\begin{proof}
Call $\mathbf{F}$ the matrix on the right of \eqref{eq:factor} without
$\PP_N$. Combining \eqref{eq:blockdiag} with
\eqref{eq:Gdefs}--\eqref{eq:Qdef} gives
$\mathbf{F}\,\T_N(\rho)\,\mathbf{F}^{T}
=\mathrm{diag}\big(\Delta^{(s)}_M,\Delta^{(a)}_M\big)$, a diagonal
matrix; the rows of $\mathbf{F}$ are therefore a complete
orthonormal system of eigenvectors of $\T_N$, and hence of $\R_N$,
with the first $M$ rows spanning the symmetric modes and the last $M$
rows the antisymmetric ones, each half internally ordered by
increasing eigenvalue of $\T_N$, i.e.\ by \emph{decreasing}
eigenvalue of $\R_N$. It remains to show that the KLT ordering of all
$N$ modes alternates between the halves, starting with the symmetric
one. By Proposition~\ref{prop:phaseeq}, the symmetric modes are the
phase-equation roots of odd $m$ and the antisymmetric ones those of
even $m$; the roots $\omega_m$ strictly increase with $m$ and
$\lambda(\omega)$ strictly decreases, so
$\lambda_1>\lambda_2>\dots>\lambda_N$ alternates
symmetric--antisymmetric. The permutation that realizes this
interleaving of the two internally ordered halves is exactly
$\PP_N$.
\end{proof}

\begin{corollary}[Chen's factorization as the $\rho\to1$ limit]
\label{cor:chen}
In the limit $\rho\to1$, the rank-one terms in \eqref{eq:folds}
and \eqref{eq:Gdefs} vanish. The branch diagonals become
$\D^{(s)}_M=\Lambda^{\mathrm{II}}_M$ and
$\D^{(a)}_M=\Lambda^{\mathrm{IV}}_M$, already diagonal with ascending
entries, so
$\Q^{(s)}_M=\Q^{(a)}_M=\I_M$ and \eqref{eq:factor} degenerates to
\begin{equation}
\W_N=\PP_N
\begin{pmatrix}\C^{\mathrm{II}}_M & \\[2pt]
 & \C^{\mathrm{IV}}_M\end{pmatrix}\B_N
\label{eq:chen}
\end{equation}
---precisely the classical Chen--Smith--Fralick factorization of the
DCT-II~\cite{chen1977}. Chen's algorithm is thus the degenerate
endpoint of an exact $\rho$-parametric factorization of the AR(1)
KLT.
\end{corollary}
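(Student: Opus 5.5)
The plan is a limit argument, carried out one ingredient at a time in the exact identities of Lemma~\ref{lem:folds} and \eqref{eq:Gdefs}--\eqref{eq:Ddefs}.

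\emph{Step 1: the branch matrices.} The coefficient $\rho(1-\rho)$ of the rank-one terms tends to $0$ as $\rho\to1$. The shift $(1-\rho)^2\I_M$ also vanishes, and $\rho\,\Lambda^{\mathrm{II}}_M\to\Lambda^{\mathrm{II}}_M$, $\rho\,\Lambda^{\mathrm{IV}}_M\to\Lambda^{\mathrm{IV}}_M$. Hence $\G^{(s)}_M\to\Lambda^{\mathrm{II}}_M$ and $\G^{(a)}_M\to\Lambda^{\mathrm{IV}}_M$.

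\emph{Step 2: the limit diagonals are ascending and simple.} By \eqref{eq:lambdas}, the entries $2-2\cos(n\pi/M)$ and $2-2\cos((2n+1)\pi/(2M))$ are strictly increasing in $n$, because cosine is strictly decreasing on $[0,\pi]$.

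\emph{Step 3: the corrections tend to the identity (main obstacle).} The eigenvalues of each limit matrix are simple, so eigenvectors depend continuously on the matrix near $\rho=1$; this is standard perturbation theory for symmetric matrices with a spectral gap. The unit eigenvectors of $\G^{(x)}_M$, ordered ascending, therefore converge to $\pm e_n$. The subtlety is the sign: $\Q^{(x)}_M$ is defined only up to row signs. I would fix the convention that each row has a positive diagonal entry, which is nonzero near the limit by continuity. With that choice, $\Q^{(x)}_M\to\I_M$, so Corollary~\ref{cor:chen}'s statement $\Q^{(s)}_M=\Q^{(a)}_M=\I_M$ holds as a limit.

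\emph{Step 4: the limit of \eqref{eq:factor}.} Substituting these limits into \eqref{eq:factor} gives \eqref{eq:chen}. To identify it with the DCT-II, I would not rely on Chen's paper alone. Instead I would check directly that the right side of \eqref{eq:chen} equals $\C^{\mathrm{II}}_N$. Butterfly rows $\B_N$ followed by $\C^{\mathrm{II}}_M$ and $\C^{\mathrm{IV}}_M$ produce the even-index and odd-index rows of $\C^{\mathrm{II}}_N$, respectively. This uses the standard half-angle trigonometric identities: the cosines $\cos\frac{\pi(2n)(2k+1)}{2N}$ are symmetric under $k\mapsto N-1-k$, and the cosines $\cos\frac{\pi(2n+1)(2k+1)}{2N}$ are antisymmetric. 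The interleaving $\PP_N$ places these rows correctly.

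Under this sign convention, the identity also agrees with Clarke's classical limit $\W_N\to\C^{\mathrm{II}}_N$ noted after Proposition~\ref{prop:phaseeq}.
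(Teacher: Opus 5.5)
Your proposal is correct and follows the paper's own argument: the coefficient $\rho(1-\rho)$ and the shift $(1-\rho)^{2}$ vanish, the limiting diagonals $\Lambda^{\mathrm{II}}_M,\Lambda^{\mathrm{IV}}_M$ are strictly ascending, and so the ordered diagonalizers tend to $\I_M$. Your Steps 3 and 4 make explicit what the paper leaves implicit or delegates to~\cite{chen1977}: first, continuity of eigenvectors at a simple spectrum, under a fixed row-sign convention; second, the direct check that $\PP_N(\C^{\mathrm{II}}_M\oplus\C^{\mathrm{IV}}_M)\B_N=\C^{\mathrm{II}}_N$, which uses the symmetry and antisymmetry of the even and odd DCT-II rows under $k\mapsto N-1-k$.
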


Theorem~\ref{thm:folded} is the direct counterpart of the
recursive factorization \eqref{eq:tsprec}: both open with the
same butterfly $\B_N$ and interleaving $\PP_N$, but there the parity
halves are carried by the half-order KLTs $\W_M(\rho)$, and here by
the fixed cores $\C^{\mathrm{II}}_M,\C^{\mathrm{IV}}_M$, with all
$\rho$-dependence pushed into the terminal corrections
$\Q^{(s)}_M,\Q^{(a)}_M$. Since both middle factors diagonalize the
same blocks \eqref{eq:blockdiag} with the same eigenvalue ordering,
the two agree termwise up to row signs:
$\Q^{(s)}_M\C^{\mathrm{II}}_M=\widehat\Q^{(s)}_M\W_M(\rho)$ and
$\Q^{(a)}_M\C^{\mathrm{IV}}_M=\widehat\Q^{(a)}_M\W_M(\rho)$. The
present corrections thus absorb, in a single stage, the entire
recursion tail of \eqref{eq:tsprec}. This is what enables the reuse
of existing fast transforms: the cores in \eqref{eq:factor} may be
computed by \emph{any} known fast factorization of the DCT-II and
DCT-IV~\cite{chen1977,britanak2006}, whereas the recursion of
Ref.~[\citenum{reznik2026tsp}] generates its own trigonometric stages.

\section{Implementing the correction factors}
\label{sec:budget}

Everything specific to the KLT beyond Chen's fixed structure is
concentrated in the two correction factors $\Q^{(s)}_M,\Q^{(a)}_M$
defined by \eqref{eq:Gdefs}--\eqref{eq:Qdef}. This section describes
their structure and exact application
(Sec.~\ref{sec:exact-impl}), compares the available realizations
and their operation counts (Sec.~\ref{sec:opcounts}), and works the
construction out in closed form at the two short lengths $N=4$
(Sec.~\ref{sec:n4}) and $N=8$ (Sec.~\ref{sec:code}); what to do
when $N$ is large is taken up in Sec.~\ref{sec:largeN}.

\subsection{Structure of the corrections and exact application}
\label{sec:exact-impl}

For $0<\rho<1$, the matrices \eqref{eq:Gdefs} are of
diagonal-plus-rank-one form, a classical
eigenproblem~\cite{bunch1978}. Fix a branch $x\in\{s,a\}$ and
abbreviate $\G=\G^{(x)}_M=\D+\sigma qq^{T}$, where
$\D=\D^{(x)}_M=\mathrm{diag}(d_0,\dots,d_{M-1})$, with
$d_0<\dots<d_{M-1}$ by \eqref{eq:Ddefs}, $q=q^{(x)}$, and
$\sigma=\rho(1-\rho)>0$. The eigenvalues $\mu_0<\dots<\mu_{M-1}$ of
$\G$ are the roots of the scalar function
\begin{equation}
f(\mu)=1+\sigma\sum_{i=0}^{M-1}\frac{q_i^{2}}{d_i-\mu},
\label{eq:secfun}
\end{equation}
strictly interlacing the poles: $d_i<\mu_i<d_{i+1}$ for
$i=0,\dots,M-2$, with
$d_{M-1}<\mu_{M-1}<d_{M-1}+\sigma\|q\|^{2}$. The corresponding unit
eigenvectors---the rows of $\Q^{(x)}_M$ in \eqref{eq:Qdef}---are~\cite{bunch1978}
\begin{equation}
\big[\Q^{(x)}_M\big]_{i,:}\;\propto\;
\big(\D-\mu_i\I_M\big)^{-1}q .
\label{eq:eigvecs}
\end{equation}
Thus the corrections are \emph{explicit} once the $M$ scalar roots of
\eqref{eq:secfun} are found---an offline, one-time computation for
the known $\rho$ (for $M\le4$, the roots are expressible in radicals;
cf.\ Sec.~\ref{sec:secular}). The endpoint cases $\rho\to0$ and $\rho\to1$ follow from the
limits already described. Two exact runtime options are immediate.
First, $\Q^{(x)}_M$ may simply be applied as a dense
$M\times M$ multiply, at $M^{2}$ multiplications per correction on
top of the fast cores. Second, each $\Q^{(x)}_M$, being an $M\times M$
rotation with $M(M{-}1)/2$ degrees of freedom, factors exactly into
$M(M{-}1)/2$ Givens rotations (six at $M=4$) by QR elimination, with
all angles computed offline; in lifting form this is preferable when
multiplierless or integer designs are the goal. Either way the
algorithm is the exact KLT, as guaranteed by
Theorem~\ref{thm:folded}. These and two further exact realizations
are compared next.

\subsection{Realizations and operation counts}
\label{sec:opcounts}

The factorization \eqref{eq:factor} doubles as a cost decomposition.
The butterfly stage is $N$ additions, with its scaling $1/\sqrt2$
absorbable into the factors downstream; the cores carry fixed DCT
arithmetic, to which the fast-DCT literature applies
unchanged~\cite{britanak2006}; and the corrections carry everything
that depends on $\rho$. Because the corrections reduce to identities
at $\rho=1$ (Corollary~\ref{cor:chen}), they are, operation for
operation, the exact price of $\rho$-adaptivity over Chen's fixed
DCT-II: the question ``how much costlier is the exact KLT than the
DCT?'' reduces to the cost of applying $\Q^{(s)}_M,\Q^{(a)}_M$.

Two further exact realizations sharpen the answer. First, since
$\tfrac{1}{\sqrt2}\,\Q^{(s)}_M\C^{\mathrm{II}}_M$ (and its
antisymmetric counterpart) is itself just an $M\times M$ matrix, the
cores, corrections, and butterfly scaling may be \emph{merged} into
a single dense multiply per branch: one addition-only butterfly
followed by two dense half-size blocks at $(N/2)^{2}$
multiplications each, i.e.\ $N^{2}/2$ multiplications and
$N^{2}/2$ additions per complete transform. Merging pays because the butterfly
is the only $\rho$-independent, multiplication-free factor
available---it realizes the reflection symmetry of the
covariance---and it can be exploited only once: splitting a branch
again would require a new dense correction costing as much as the
split saves. The point is quantitative: at $N=16$, one fold with
merged branches costs $128$ multiplications while a recursive step
with dense corrections costs $192$, and full recursion approaches
the dense count ($16\,064$ against $16\,384$ at $N=128$). Second, in the manner of the Arai--Agui--Nakajima
scaled DCT~\cite{arai1988}, each merged row may be normalized by its
largest entry, making one coefficient per row equal to $\pm1$ and
leaving all others of magnitude below one; the resulting positive
diagonal folds into the quantizer and dequantizer of a coding chain
at no cost. This \emph{scaled} form needs three multiplications per
row---$N^{2}/2-N$ in total---and, at $N\le8$, all its coefficients
remain radicals in $\rho$; the $\pm1$ positions are part of the
offline design.
Tables~\ref{tab:ops4} and~\ref{tab:ops} below quantify these
realizations at $N=4$ and $N=8$.

\subsection{The simplest case: $N=4$}
\label{sec:n4}

At $N=2$ the KLT is the $\rho$-independent butterfly. The first
nontrivial length, $N=4$, is the degenerate case of the correction
machinery: for $M=2$ the matrices \eqref{eq:Gdefs} are symmetric
$2\times2$, so each correction $\Q^{(x)}_2$ is a single plane
rotation and no root-finding of any kind arises. Simplest of all is
the merged realization of Sec.~\ref{sec:opcounts}: the products
$\Q^{(s)}_2\C^{\mathrm{II}}_2$ and
$\Q^{(a)}_2\C^{\mathrm{IV}}_2$ are the eigenvector matrices of
the $2\times2$ blocks \eqref{eq:folds} directly,
\begin{equation}
\T^{(s)}_2=\begin{pmatrix}1 & -\rho\\ -\rho & 1+\rho^{2}-\rho\end{pmatrix},
\qquad
\T^{(a)}_2=\begin{pmatrix}1 & -\rho\\ -\rho & 1+\rho^{2}+\rho\end{pmatrix},
\label{eq:folds4}
\end{equation}
and are single plane rotations $G(\psi_s)$, $G(\psi_a)$, where
\begin{equation}
G(\psi)=\begin{pmatrix}\cos\psi & -\sin\psi\\
\sin\psi & \cos\psi\end{pmatrix}.
\label{eq:rot}
\end{equation}
Applying
to \eqref{eq:folds4} the $2\times2$ eigenrotation formula
$\tan2\psi=2b/(a-c)$ for
$\big(\begin{smallmatrix}a&b\\ b&c\end{smallmatrix}\big)$
gives directly
\begin{equation}
\tan2\psi_s=\frac{-2\rho}{1-(1+\rho^{2}-\rho)}=\frac{-2}{1-\rho},
\qquad
\tan2\psi_a=\frac{-2\rho}{1-(1+\rho^{2}+\rho)}=\frac{2}{1+\rho},
\label{eq:angles}
\end{equation}
i.e.\ $\psi_s=\pi/2-\tfrac12\arctan\tfrac{2}{1-\rho}$ and
$\psi_a=\tfrac12\arctan\tfrac{2}{1+\rho}$; the cosine--sine
pairs follow by half-angle identities---rational functions and
square roots in $\rho$.

\begin{figure}[t]
\centering
\begin{tikzpicture}[x=1cm,y=1.1cm,font=\small,
  port/.style={circle,fill,inner sep=1.1pt}]
% inputs
\foreach \i in {0,...,3}{
  \node[left] (x\i) at (0,{3-\i}) {$x_{\i}$};
  \node[port] (px\i) at (0.08,{3-\i}) {};
}
% butterfly outputs
\foreach \r/\lab in {0/u_0,1/u_1,2/v_1,3/v_0}{
  \node[port] (b\r) at (2.7,{3-\r}) {};
  \node[above=0.5pt] at (2.7,{3-\r}) {\scriptsize $\lab$};
}
% butterfly connections ($-$ marks a negated branch)
\draw (px0)--(b0); \draw (px3)--(b0);
\draw (px1)--(b1); \draw (px2)--(b1);
\draw (px1)--(b2);
\draw (px2)--(b2) node[pos=0.8,below=0.5pt,font=\scriptsize]{$-$};
\draw (px0)--(b3);
\draw (px3)--(b3) node[pos=0.8,below=0.5pt,font=\scriptsize]{$-$};
% reorder J2
\node[port] (c2) at (4.3,1) {}; \node[above=0.5pt] at (4.3,1) {\scriptsize $v_0$};
\node[port] (c3) at (4.3,0) {}; \node[above=0.5pt] at (4.3,0) {\scriptsize $v_1$};
\draw (b2)--(c3); \draw (b3)--(c2);
\draw (b0)--(4.9,3); \draw (b1)--(4.9,2);
\draw (c2)--(4.9,1); \draw (c3)--(4.9,0);
% fixed cores
\draw[fill=blue!6] (4.9,1.55) rectangle (6.5,3.45);
\node[align=center] at (5.7,2.5)
  {$\C^{\mathrm{II}}_2$\\[-1pt]{\scriptsize fixed}};
\draw[fill=blue!6] (4.9,-0.45) rectangle (6.5,1.45);
\node[align=center] at (5.7,0.5)
  {$\C^{\mathrm{IV}}_2$\\[-1pt]{\scriptsize fixed}};
% corrections
\foreach \yy in {0,...,3}{ \draw (6.5,\yy)--(6.9,\yy); }
\draw[fill=orange!12] (6.9,1.55) rectangle (8.7,3.45);
\node at (7.8,2.5) {$\Q^{(s)}_2(\rho)$};
\draw[fill=orange!12] (6.9,-0.45) rectangle (8.7,1.45);
\node at (7.8,0.5) {$\Q^{(a)}_2(\rho)$};
% outputs
\foreach \yy/\lab in {3/{y_0\ (n=0)},2/{y_2\ (n=2)},1/{y_1\ (n=1)},0/{y_3\ (n=3)}}{
  \node[port] (o\yy) at (9.5,\yy) {};
  \node[right] at (9.5,\yy) {$\lab$};
  \draw (8.7,\yy)--(o\yy);
}
% stage labels
\node at (1.35,3.95) {\itshape symmetric butterfly};
\node at (3.55,3.95) {\itshape reorder $\J_2$};
\node at (5.7,3.95) {\itshape fixed cores};
\node at (7.8,3.95) {\itshape corrections};
% footnote
\node at (4.75,-1.25)
  {\footnotesize $\Q^{(s)}_2=\Q^{(a)}_2=\I_2$ at $\rho=1$\quad
   (fast 4-point DCT-II)};
\end{tikzpicture}
\caption{Signal-flow graph of the exact 4-point AR(1) KLT
\eqref{eq:factor}: fully symmetric butterfly, reorder stage $\J_2$
restoring the difference coordinates to natural order, fixed cores
$\C^{\mathrm{II}}_2,\C^{\mathrm{IV}}_2$, and the $\rho$-dependent
corrections $\Q^{(s)}_2,\Q^{(a)}_2$. A $-$ sign marks a negated
branch, in the drawing convention of Ref.~[\citenum{chen1977}];
the butterfly factor $1/\sqrt2$ is absorbed downstream in
implementation. In the limit $\rho\to1$ the corrections equal
$\I_2$ and the graph is the fast 4-point DCT-II. In implementation,
the core and correction of each branch merge into a single plane
rotation with the closed-form angle \eqref{eq:angles}.}
\label{fig:flow4}
\end{figure}

The structural form of the 4-point AR(1) KLT is the flow graph of
Fig.~\ref{fig:flow4}: one butterfly stage, the trivial reorder
$\J_2$, the fixed $2\times2$ cores, and the two corrections. In
implementation, the core and correction of each branch merge into
the single rotation $G(\psi_s)$, resp.\ $G(\psi_a)$, of
\eqref{eq:angles}, so the complete transform is the fast 4-point
DCT-II graph with two $\rho$-tunable angles. Implementing
each rotation by three-step lifting (3 multiplications, 3
additions) gives 6 multiplications and 10 additions for every
$\rho$. In the limit $\rho\to1$,
\eqref{eq:angles} gives $(45^{\circ},22.5^{\circ})$, the Chen
angles; the symmetric rotation then degenerates to a scaled
butterfly, recovering the usual DCT-II operation counts. As
$\rho\to0$, they tend to $(\arctan\phi,\tfrac12\arctan 2)$, with
$\phi=(1+\sqrt5)/2$, giving the DST-I. The two formulas exchange
under $\rho\mapsto-\rho$: the substitution $x_k\mapsto(-1)^kx_k$
turns an AR($\rho$) source into an AR($-\rho$) one and swaps the
symmetric and antisymmetric classes.
Table~\ref{tab:ops4} collects the operation counts. Note that at
this size merging beats even the generic count $N^{2}/2=8$: each
merged block is a scaled rotation, so the three-multiplication form
applies. In the same three-multiplication convention, the fixed
fast DCT-II costs five multiplications (its difference-branch
rotation, plus two output scalings); the classical dense-rotation
count is six~\cite{britanak2006}. Exact $\rho$-adaptivity at $N=4$
therefore costs a single multiplication---six against five---and
one extra addition: the sum branch acquires a rotation where the
DCT-II has only scalings.

\begin{table}[t]
\vspace{10pt}
\caption{Operation counts of exact realizations of the $4$-point
AR(1) KLT; all design constants are precomputed offline from $\rho$.
Rotations are counted in three-multiplication form.}
\label{tab:ops4}
\centering
\begin{tabular}{lcc}
\toprule
Realization & mult. & add. \\
\midrule
Dense $\W_4$ & 16 & 12 \\
Fast DCT-II graph $+$ rotation corrections & 12 & 14 \\
Merged branches: two rotations \eqref{eq:angles} & 6 & 10 \\
Scaled merged (diagonal absorbed by quantizer) & 4 & 8 \\
\midrule
DCT-II, fast 4-point (three-mult.\ rotation) & 5 & 9 \\
DCT-II, fast 4-point, classical count~\cite{britanak2006} & 6 & 8 \\
\bottomrule
\end{tabular}
\end{table}

\subsection{The case $N=8$: design in radicals}
\label{sec:code}

For $N=8$ the factorization \eqref{eq:factor} is the flow graph of
Fig.~\ref{fig:flow8}. We emphasize that the construction is exact:
\eqref{eq:factor} is an identity, so the
algorithm of Fig.~\ref{fig:flow8} with the full corrections
reproduces the KLT output exactly (up to arithmetic precision), and
its coding-gain loss relative to the KLT is \emph{identically zero}.

\begin{figure}[t]
\centering
\begin{tikzpicture}[x=1cm,y=0.78cm,font=\small,
  port/.style={circle,fill,inner sep=1.1pt}]
% inputs
\foreach \i in {0,...,7}{
  \node[left] (x\i) at (0,{7-\i}) {$x_{\i}$};
  \node[port] (px\i) at (0.08,{7-\i}) {};
}
% butterfly outputs: rows 0-3 sums u_0..u_3, rows 4-7 differences v_3..v_0
\foreach \r/\lab in {0/u_0,1/u_1,2/u_2,3/u_3,4/v_3,5/v_2,6/v_1,7/v_0}{
  \node[port] (b\r) at (3.1,{7-\r}) {};
  \node[above=0.5pt] at (3.1,{7-\r}) {\scriptsize $\lab$};
}
% butterfly connections (dashed = negated)
\foreach \r [evaluate=\r as \m using {int(7-\r)}] in {0,...,3}{
  \draw (px\r)--(b\r);
  \draw (px\m)--(b\r);
}
\foreach \r [evaluate=\r as \m using {int(7-\r)}] in {4,...,7}{
  \draw (px\m)--(b\r);
  \draw (px\r)--(b\r) node[pos=0.85,below=0.3pt,font=\scriptsize]{$-$};
}
% reorder J4 (difference wires restored to natural order)
\foreach \r [evaluate=\r as \yy using {int(\r-4)},
             evaluate=\r as \lab using {int(7-\r)}] in {4,...,7}{
  \node[port] (c\r) at (4.9,\yy) {};
  \node[above=0.5pt] at (4.9,\yy) {\scriptsize $v_{\lab}$};
  \draw (b\r)--(c\r);
  \draw (c\r)--(5.3,\yy);
}
\foreach \r in {0,...,3}{ \draw (b\r)--(5.3,{7-\r}); }
% fixed fast cores
\draw[fill=blue!6] (5.3,3.55) rectangle (7.3,7.45);
\node[align=center] at (6.3,5.5)
  {$\C^{\mathrm{II}}_4$\\[-1pt]{\scriptsize fast DCT-II}};
\draw[fill=blue!6] (5.3,-0.45) rectangle (7.3,3.45);
\node[align=center] at (6.3,1.5)
  {$\C^{\mathrm{IV}}_4$\\[-1pt]{\scriptsize fast DCT-IV}};
% corrections
\foreach \yy in {0,...,7}{ \draw (7.3,\yy)--(7.7,\yy); }
\draw[fill=orange!12] (7.7,3.55) rectangle (9.6,7.45);
\node at (8.65,5.5) {$\Q^{(s)}_4(\rho)$};
\draw[fill=orange!12] (7.7,-0.45) rectangle (9.6,3.45);
\node at (8.65,1.5) {$\Q^{(a)}_4(\rho)$};
% outputs
\foreach \yy/\lab in {7/{y_0\ (n=0)},6/{y_2\ (n=2)},5/{y_4\ (n=4)},
  4/{y_6\ (n=6)},3/{y_1\ (n=1)},2/{y_3\ (n=3)},1/{y_5\ (n=5)},0/{y_7\ (n=7)}}{
  \node[port] (o\yy) at (10.5,\yy) {};
  \node[right] at (10.5,\yy) {$\lab$};
  \draw (9.6,\yy)--(o\yy);
}
% stage labels
\node at (1.55,8.3) {\itshape symmetric butterfly};
\node at (4.15,8.3) {\itshape reorder $\J_4$};
\node at (6.3,8.3) {\itshape fixed fast cores};
\node at (8.65,8.3) {\itshape corrections};
% footnote
\node at (5.3,-1.5)
  {\footnotesize $\Q^{(s)}_4=\Q^{(a)}_4=\I_4$ at $\rho=1$\quad
   (Chen's DCT-II factorization)};
\end{tikzpicture}
\caption{Signal-flow graph of the exact 8-point AR(1) KLT
\eqref{eq:factor}: fully symmetric butterfly, reorder stage $\J_4$
restoring the difference coordinates to natural order at the input of
the DCT-IV core, fixed fast cores $\C^{\mathrm{II}}_4$ and
$\C^{\mathrm{IV}}_4$, and the $\rho$-dependent corrections
$\Q^{(s)}_4,\Q^{(a)}_4$ of \eqref{eq:Qdef}. A $-$ sign marks a
negated branch. In the limit $\rho\to1$ the corrections equal $\I_4$
and the graph is Chen's DCT-II factorization.}
\label{fig:flow8}
\end{figure}

The entire design stage---everything computed offline from
$\rho$---still reduces to exact expressions in radicals, and the
corrections can be written down directly. The core eigenvalues
\eqref{eq:lambdas} at $M=4$ are
\begin{equation}
\Lambda^{\mathrm{II}}_4=\mathrm{diag}\big(0,\;2-\sqrt2,\;2,\;2+\sqrt2\big),
\qquad
\Lambda^{\mathrm{IV}}_4=\mathrm{diag}\big(2\mp\textstyle\sqrt{2\pm\sqrt2}\big),
\label{eq:lamvals}
\end{equation}
the latter listing
$2-\sqrt{2+\sqrt2},\,2-\sqrt{2-\sqrt2},\,2+\sqrt{2-\sqrt2},\,
2+\sqrt{2+\sqrt2}$ in ascending order, so the branch diagonals
$d_j=\rho\lambda_j+(1-\rho)^{2}$ are radicals in $\rho$. The
coupling weights need no separate table: from \eqref{eq:dctdefs},
\begin{equation}
\big(q^{(x)}_j\big)^{2}=\beta_j^{2}\,\frac{4-\lambda_j}{8},
\label{eq:qweights}
\end{equation}
with $\beta_j\equiv1$ for the antisymmetric branch---the couplings
are determined by the core eigenvalues themselves. The correction
eigenvalues are the roots of the quartic characteristic polynomial
of the tridiagonal blocks \eqref{eq:folds},
\begin{equation}
\begin{aligned}
p(\mu;\rho)=\mu^{4}
&-\big(4-\rho+3\rho^{2}\big)\mu^{3}
+\big(6-3\rho+6\rho^{2}-2\rho^{3}+3\rho^{4}\big)\mu^{2}\\
&-\big(4-3\rho+3\rho^{2}-2\rho^{3}+2\rho^{4}-\rho^{5}+\rho^{6}\big)\mu
+(1-\rho),
\end{aligned}
\label{eq:quartic}
\end{equation}
evaluated at $+\rho$ for the symmetric branch and at $-\rho$ for the
antisymmetric one (the sign substitution of Sec.~\ref{sec:n4});
its four roots $\mu_0<\dots<\mu_3$ are radicals in $\rho$ by the
classical quartic formulas (cf.\ Sec.~\ref{sec:secular}). The
entries then follow from \eqref{eq:eigvecs} in one line,
\begin{equation}
\big[\Q^{(x)}_4\big]_{ij}
=\frac{q^{(x)}_j}{d_j-\mu_i}
\Bigg(\sum_{k=0}^{3}\frac{\big(q^{(x)}_k\big)^{2}}{(d_k-\mu_i)^{2}}
\Bigg)^{\!-1/2},
\label{eq:Qentries}
\end{equation}
up to the overall sign of each row. QR elimination then delivers
the Givens decomposition of Sec.~\ref{sec:exact-impl} explicitly:
each correction is a product of six plane rotations,
\begin{equation}
\Q^{(x)}_4 \;=\; G_1G_2G_3G_4G_5G_6\,\mathbf{S}_x,
\qquad x\in\{s,a\},
\label{eq:givens}
\end{equation}
with $\mathbf{S}_x$ a diagonal of signs $\pm1$ and each $G_k$ a
Givens rotation: the $4\times4$ identity with the rotation
\eqref{eq:rot} embedded in a single coordinate plane $(p,q)$,
$p<q$. For example, in the plane $(p,q)=(1,3)$,
\begin{equation}
G_k=\begin{pmatrix}
1 & & & \\
 & \cos\theta_k & & -\sin\theta_k\\
 & & 1 & \\
 & \sin\theta_k & & \cos\theta_k
\end{pmatrix},
\label{eq:givdef}
\end{equation}
so that coordinates $1$ and $3$ are rotated by $\theta_k$ and the
others pass through unchanged. The planes are
$(p,q)=(0,1)$, $(0,2)$, $(0,3)$, $(1,2)$, $(1,3)$, $(2,3)$ for
$k=1,\dots,6$, and
the angles $\theta_k$ are produced by QR elimination of
$\Q^{(x)}_4$: each cosine--sine pair is a ratio of the entries
\eqref{eq:Qentries} and a square root thereof---again radicals in
$\rho$ (Fig.~\ref{fig:givens}). No iterative
eigensolver enters at any stage, and none of the asymptotic
machinery of Sec.~\ref{sec:largeN} is needed at these lengths. The
rotations are, moreover, small at high correlation
($\|\Q^{(x)}_4-\I_4\|_{\max}\le0.053$ already at $\rho=0.9$), which
keeps the chains \eqref{eq:givens} lifting- and integer-friendly.

\begin{figure}[t]
\centering
\begin{tikzpicture}[x=1cm,y=1.0cm,font=\small,
  port/.style={circle,fill,inner sep=1.1pt}]
% wires
\foreach \i in {0,...,3}{
  \node[left] at (0,{3-\i}) {$z_{\i}$};
  \node[port] (in\i) at (0.08,{3-\i}) {};
  \draw (in\i) -- (9.2,{3-\i});
  \node[port] at (9.2,{3-\i}) {};
  \node[right] at (9.2,{3-\i}) {$\bigl[\Q^{(x)}_4 z\bigr]_{\i}$};
}
% sign diagonal
\draw[fill=blue!6] (0.7,-0.45) rectangle (1.7,3.45);
\node at (1.2,1.5) {$\mathbf{S}_x$};
% six plane rotations, applied left to right: G6,...,G1;
% each is a butterfly: rising diagonal carries -sin (marked $-$)
\foreach \x/\k/\i/\j in {2.6/6/2/3, 3.7/5/1/3, 4.8/4/1/2,
                         5.9/3/0/3, 7.0/2/0/2, 8.1/1/0/1}{
  \draw ({\x-0.35},{3-\i}) -- ({\x+0.35},{3-\j});
  \draw ({\x-0.35},{3-\j}) -- ({\x+0.35},{3-\i})
    node[pos=0.26,right=-1pt,font=\scriptsize]{$-$};
  \node[port] at ({\x-0.35},{3-\i}) {};
  \node[port] at ({\x-0.35},{3-\j}) {};
  \node[port] at ({\x+0.35},{3-\i}) {};
  \node[port] at ({\x+0.35},{3-\j}) {};
  \node[above=1.5pt] at (\x,{3-\i}) {\scriptsize $G_{\k}$};
}
% stage labels
\node at (1.2,3.95) {\itshape signs};
\node at (5.35,3.95) {\itshape plane rotations (applied in order $G_6,\dots,G_1$)};
\end{tikzpicture}
\caption{Signal-flow graph of the six-rotation implementation
\eqref{eq:givens} of a correction $\Q^{(x)}_4(\rho)$,
$x\in\{s,a\}$: the $\pm1$ sign diagonal $\mathbf{S}_x$ followed by
the plane rotations $G_6,\dots,G_1$ of
\eqref{eq:givens}--\eqref{eq:givdef}, acting in the coordinate
pairs $(2,3)$, $(1,3)$, $(1,2)$, $(0,3)$, $(0,2)$, $(0,1)$. Each
rotation is drawn as a butterfly between the two wires it couples:
the through branches carry $\cos\theta_k$ and the crossing branches
$\pm\sin\theta_k$, the negated branch marked by a $-$ sign,
exactly as in
\eqref{eq:givdef}; wires crossed by a diagonal without a terminal
dot are unaffected. All cosine--sine pairs are computed offline, in
radicals in $\rho$. As $\rho\to1$ every angle tends to zero and
each butterfly degenerates to straight-through wires.}
\label{fig:givens}
\end{figure}

Table~\ref{tab:ops} collects the counts at $N=8$, with fixed DCT-II
algorithms for reference. Exact $\rho$-adaptivity costs about twice
Chen's algorithm: $32$ versus $16$ multiplications, or $24$ in
scaled form. At $N=16$ the same constructions give $256$ (dense),
$128$ (merged), and $112$ (scaled) multiplications against $44$ for
Chen---roughly a factor of three---and the gap widens like
$N/\log N$ as long as the KLT is computed exactly, since the exact
count is $N^{2}/2$ against the DCT's $O(N\log N)$. It contracts again at
large $N$: with the compressed corrections of
Sec.~\ref{sec:largeN}, the total returns to $O(N\log N)$ plus a
linear term and the overhead ratio decays toward one. The overhead of computing the full KLT is thus largest at
intermediate block sizes and mildest at the short lengths treated
above and in the asymptotic regime. One boundary is worth recording:
at $N=16$ the correction eigenvalues become roots of degree-eight
characteristic polynomials, no longer expressible in radicals, so
the design stage beyond $N=8$ uses the root-finding of
Sec.~\ref{sec:largeN}; the runtime realizations above are
unaffected.

\begin{table}[t]
\vspace{10pt}
\caption{Operation counts of exact realizations of the $8$-point
AR(1) KLT; all design constants are precomputed offline from $\rho$.
Fixed DCT-II algorithms are shown for reference. Rotation chains are
counted in three-multiplication form.}
\label{tab:ops}
\centering
\begin{tabular}{lcc}
\toprule
Realization & mult. & add. \\
\midrule
Dense $\W_8$ & 64 & 56 \\
Chen cores $+$ dense $\Q^{(s)}_4,\Q^{(a)}_4$ (Fig.~\ref{fig:flow8}) & 48 & 50 \\
Chen cores $+$ rotation chains \eqref{eq:givens} & 52 & 62 \\
Butterfly $+$ merged dense branches & 32 & 32 \\
Scaled merged (diagonal absorbed by quantizer) & 24 & 32 \\
\midrule
DCT-II, Chen \emph{et al.}~\cite{chen1977} & 16 & 26 \\
DCT-II, Loeffler \emph{et al.}~\cite{loeffler1989} & 11 & 29 \\
\bottomrule
\end{tabular}
\end{table}

\section{Large $N$: hierarchical $O(N\log N)$ application}
\label{sec:largeN}

At large $N$, the factorization \eqref{eq:factor} yields fast,
accuracy-controlled application of the \emph{exact} KLT. The cores
$\C^{\mathrm{II}}_M,\C^{\mathrm{IV}}_M$ cost $O(N\log N)$ by any of
the standard fast DCT
factorizations~\cite{chen1977,britanak2006}. Everything else is the
two corrections, and their eigenproblem \eqref{eq:Gdefs}, with
solution \eqref{eq:secfun}--\eqref{eq:eigvecs}, is not exotic: a
diagonal-plus-rank-one matrix is precisely the pivotal object of the
divide-and-conquer eigensolvers of Cuppen~\cite{cuppen1981} and
Dongarra and Sorensen~\cite{dongarra1987}, and a mature computational
toolkit surrounds it. The roots of \eqref{eq:secfun} (the ``secular equation'' of those
solvers) are found offline by their fast, guaranteed-convergent
iterations. At run time, multiplication by the
eigenvector matrix \eqref{eq:eigvecs} need not cost $O(M^{2})$: by
\eqref{eq:eigvecs} its entries have the Cauchy form
$\gamma_i\,q_j/(d_j-\mu_i)$---$\gamma_i$ being the row
normalizations---with interlaced pole sequences, and such
matrices can be applied in near-linear time by the fast multipole
method~\cite{greengard1987}, as done in the stabilized
divide-and-conquer algorithm of Gu and Eisenstat~\cite{gu1995}, or
through hierarchically semiseparable (HSS) compressed
representations~\cite{xia2010}.

At $M=4$ these asymptotic devices offer nothing over the dense or
Givens forms of Sec.~\ref{sec:exact-impl}; their significance
emerges at large block sizes. There, the off-diagonal blocks of the
corrections are hierarchically low-rank: numerically, the
$\varepsilon$-ranks are 8 at tolerance $10^{-6}$ (12 at $10^{-10}$)
for $N=256$, growing to only 9 (13) at $N=512$---essentially
$N$-independent, as expected from the interlaced-pole structure.
Fast-multipole~\cite{greengard1987,gu1995} or
HSS-compressed~\cite{xia2010} application of the corrections
therefore runs in $O(N\log N)$ for a prescribed numerical tolerance.
All factors are precomputed offline from the known $\rho$, and the
resulting transform applies the exact KLT to that tolerance. In
total, $T(N)=N+2F(N/2)+O(rN)$ operations, where $F$ is the cost of
the chosen fast core and $r$ the compression rank: $O(N\log N)$
with a linear correction term. Two remarks complete the complexity
picture of Sec.~\ref{sec:opcounts}. The direct form pays the
correction rank once, at the top level, whereas the recursive
factorization of Ref.~[\citenum{reznik2026tsp}], applied with the
same compression at every level, costs $O(rN\log N)$---the same
order in $N$, with $r$ multiplying the logarithm. And relative to a
pure fast DCT, the overhead of computing the exact KLT behaves like
$1+O(r/\log N)$, decaying with $N$: largest at intermediate block
sizes, it vanishes in both limits.

\section{Conclusions}
\label{sec:conclusion}

The DCT-II was introduced as a fast approximation to the AR(1)
KLT~\cite{anr1974}, and is known to be accurate only as $\rho$
approaches~1~\cite{clarke1981}. It was recently shown that the exact
KLT possesses fast structure of its own: it factorizes recursively
through its own half-order copies, at every $\rho$, in $O(N\log N)$
operations~\cite{reznik2026tsp}. This paper established the
complementary, direct form of that structure. For every
$\rho\in(0,1)$ and even $N$, the exact KLT is Chen's fast DCT-II
graph---the butterfly stage followed by the fixed half-size DCT-II
and DCT-IV cores---completed by two orthogonal corrections, defined
as diagonalizers of diagonal-plus-rank-one matrices localized at the
block boundary. The corrections carry the entire $\rho$-dependence
and are unavoidable: no interior value of $\rho$ reduces the KLT to
any fixed uniform-grid transform. The trigonometric part of the
algorithm is unmodified DCT machinery, reusable verbatim, and Chen's
classical algorithm is the $\rho\to1$ endpoint, where the
corrections reduce to identities.

Beyond the factorization itself, the paper made the corrections
concrete and priced them. At $N=4$ each correction is a single plane
rotation, and the merged transform is the fast 4-point DCT-II graph
with two $\rho$-tunable closed-form angles. At $N=8$ the entire
design is written in radicals: the coupling weights are determined
by the core eigenvalues, the correction eigenvalues by one explicit
quartic serving both parity branches under $\rho\mapsto-\rho$, the
entries by a one-line eigenvector formula, and each correction
factors into six explicit Givens rotations. $N=8$ is also a
boundary: beyond it the characteristic polynomials exceed degree
four, the design stage becomes numerical, and the runtime
realizations persist unchanged. In operation counts, the
factorization doubles as an accounting of the price of exact
$\rho$-adaptivity: a single extra multiplication at $N=4$ (six
against five), about a factor of two at $N=8$ ($32$ merged or
$24$ scaled, versus $16$ for Chen), a gap growing like $N/\log N$ at
intermediate sizes, and a decay back toward parity at large $N$,
where fast-multipole or hierarchically compressed application of the
Cauchy-structured corrections computes the exact KLT in $O(N\log N)$
operations to prescribed accuracy---the direct form paying the
compression rank once, against once per level for the recursive
form.

\end{document}